\newcommand{\dom}{\mathop{\mathrm{dom}}\nolimits}
\renewcommand{\phi}{\varphi}
\newcommand{\bF}{\mathbb{F}}
\newcommand{\bL}{\mathbb{L}}
\newcommand{\bQ}{\mathbb{Q}}
\newcommand{\bS}{\mathbb{S}}
\newcommand{\bH}{\mathbb{H}}
\newcommand{\cC}{\mathcal{C}}
\newcommand{\cG}{\mathcal{G}}
\newcommand{\cH}{\mathcal{H}}
\newcommand{\cO}{\mathcal{O}}
\newcommand{\cU}{\mathcal{U}}
\newcommand{\cT}{\mathcal{T}}
\newcommand{\cX}{\mathcal{X}}
\newcommand{\N}{\mathbb{N}}
\newcommand{\Z}{\mathbb{Z}}
\theoremstyle{plain}
\newtheorem{theorem}{Theorem}[section]
\newtheorem{lemma}[theorem]{Lemma}
\newtheorem{corollary}[theorem]{Corollary}
\newtheorem{proposition}[theorem]{Proposition}
\theoremstyle{definition}
\newtheorem{definition}[theorem]{Definition}
\theoremstyle{remark}
\newtheorem{remark}[theorem]{Remark}
\begin{document}

\author{Zakhar Kabluchko}
\address{Zakhar Kabluchko: Institut f\"ur Mathematische Stochastik,
Westf\"alische Wilhelms-Universit\"at M\"unster,
Orl\'eans-Ring 10,
48149 M\"unster, Germany}
\email{zakhar.kabluchko@uni-muenster.de}

\author{Katrin Tent}
\address{Katrin Tent:
Mathematisches Institut und
Institut f\"ur Mathematische Logik und Grundlagenforschung,
Westf\"alische Wilhelms-Universit\"at M\"unster,
Einsteinstr.\ 62,
48149 M\"unster,
Germany}
\email{tent@wwu.de}

\title[]{Universal-homogeneous structures are generic}

\keywords{Fra\"iss\'e class, universality, homogeneity, comeager set, Baire property, Hall's group, random graph, Urysohn space}


\subjclass[2010]{Primary: 	03C15; Secondary: 54E52, 60B99}

\begin{abstract}
We prove that the Fra\"iss\'e limit of a  Fra\"iss\'e class $\cC$  is the (unique) countable structure whose isomorphism type is comeager (with respect to a certain logic topology) in the Baire space of all structures whose age is contained in $\cC$ and which are defined on a fixed countable universe.

In particular, the set of groups isomorphic to  Hall's universal group is comeager in the space of all countable locally finite groups and the set of fields isomorphic to the algebraic closure of $\bF_p$ is comeager in the space of  countable fields of characteristic $p$.
\end{abstract}

\maketitle

\section{Introduction}
In many cases,  universal homogeneous structures turn out to be ``typical'' in the space of all structures.  This can be understood either in the probabilistic sense of measure, or in the topological sense of category. In the former case, one constructs a probability measure on the space of all structures and proves that the collection of all structures isomorphic to the universal homogeneous one has measure $1$. Well-known examples are given by the random graph~\cite{erdoes_renyi,rado} and the Urysohn universal metric space which was constructed as a random object by Vershik~\cite{vershik_random_and_universal,vershik04,vershik06}. For an $\omega$-categorical countable $L$-structure $M$, it was proved  by~\citet{Droste_Kuske} that there is a random procedure that will almost surely construct $M$; see also~\cite{droste_goebel} and~\cite{droste_causal} for further examples.  Furthermore, by \cite{Ackerman_etal},  definable closure on $M$ is trivial if and only if
there is an $S_\infty$-invariant (or: exchangeable) probability measure on the space of $L$-structures with the set of structures isomorphic to $M$ having measure $1$.

Passing to the typicalness in the sense of category, one may ask whether the universal homogeneous structures form a comeager set in the space of all structures. Recall that a subset of a topological space is \emph{meager} (or a set of the first category) if it can be represented as a countable union of nowhere dense sets.  A set is called \emph{comeager} (or a set of the second category) if its complement is meager.   For the isomorphism type of the universal homogeneous graph and the rational Urysohn metric space, the comeagerness was proved by Vershik~\cite{vershik_random_and_universal,vershik04,vershik06}.

The aim of the present note is to prove the comeagerness of the isomorphism type of the Fra\"iss\'e limit (or the universal homogeneous structure) of an arbitrary Fra\"iss\'e class.  As a corollary, this gives an alternative proof of the existence of the Fra\"iss\'e limit.
While the statement that the Fra\"iss\'e limit is ``generic'' probably  belongs to the mathematical folklore and is well known to specialists, the  written proofs we are aware of impose some additional assumptions by considering only finite or relational structures; see \citet[(4.28)]{cameron_book}, \citet[Theorem~7.1]{cameron_age}, \citet{bankston_ruitenburg}, \citet{pouzet}, \citet{ivanov}, \citet[Theorem~2]{goldstern}, \citet[Theorems 4.2.2 and 4.1.13]{kruckman}, \citet{blog}, or use the setting of Banach--Mazur games on posets~\citet{kubis}, \citet{krawczyk_kubis}.   In Section~\ref{sec:main} we shall state and prove the main result under the same assumptions as the classical Fra\"iss\'e theorem, so that the case of finitely generated (rather than just finite) structures is covered. Compared to the case of finite structures, dealing with finitely generated structures requires introducing another topology on the space of structures, which is probably the main point of the present note.   Examples will be discussed in Section~\ref{sec:examples}.



\section{Background and the main result}\label{sec:main}
\subsection{Fra\"ss\'e limits}

Let $L$ be a countable language. For background on model theory we refer to \cite{TZ}.

\begin{definition}\label{def:Fraisse} Let $\cC$ be an infinite, countable class of (isomorphism types of) finitely generated $L$-structures.
We say that  $\cC$ is  a  \emph{Fra\"iss\'e class} if $\cC$ satisfies
\begin{itemize}
\item[(AP)] (Amalgamation Property) For any $A, B_1, B_2\in\cC$ and embeddings $h_i$ of $A$ into $B_i, i=1,2$, there is some $C\in\cC$ and embeddings $g_i$ of $B_i$ into $C, i=1,2$, such that $g_1\circ h_1=g_2\circ h_2$.
\item[(JEP)] (Joint Embedding Property) For any $A, B\in\cC$ there is some $D\in\cC$ containing $A$ and $B$ as a substructure.
\item[(HP)]  (Hereditary Property) For $A\in\cC$, any finitely generated substructure of $A$ is in~$\cC$.
\end{itemize}
\end{definition}

For an $L$-structure $M$, the \emph{age} of $M$ is the class of isomorphism types of finitely generated $L$-substructures of $M$.

Fra\"iss\'e's theorem states that for any Fra\"iss\'e class $\cC$ there is a countable structure $M$, the \textit{Fr\"aiss\'e limit} of $\cC$, with the following two  properties:

\begin{itemize}
\item (Universality) The age of $M$ is $\cC$.
\item (Homogeneity) Any isomorphism between finitely generated substructures of $M$ extends to an automorphism of $M$.
\end{itemize}

Moreover, $M$ is unique up to isomorphism.

\begin{remark}\label{rem:saturated}[\cite{TZ} Thm. 4.4.2]
A countable $L$-structure $M$ is the Fr\"aiss\'e limit of a countable class $\cC$ of finitely generated $L$-structures if and only if the following holds:
\begin{itemize}
\item[1.] The age of $M$ is $\cC$.
\item[2.] The structure $M$ is $\cC$-saturated, i.e.\ for any structures $A,B\in\cC$ with $A\subset B$ and any embedding $h$ of $A$ into $M$ there is an embedding of $B$ into $M$ extending $h$.
\end{itemize}
Furthermore, it is easy to see by the usual back-and-forth argument that these two conditions determine a countable $L$-structure uniquely up to isomorphism.

\end{remark}

Note that if $\cC$ consists of finite structures, then $M$ is locally finite, i.e.\ all finitely generated substructures of $M$ are finite.

Here and in what follows, the word `structure' always means `$L$-structure'. The set on which a structure $A$ is defined is denoted by $\dom A$ and called the universe or the domain of the structure.

\subsection{The space of structures}
Let $\cC$ be a Fra\"iss\'e class of structures whose Fra\"iss\'e limit does not belong to $\cC$.\footnote{M\'ark Po\'or gave an example of a  Fra\"iss\'e class $\cC$ whose  Fra\"iss\'e limit is finitely generated and thus belongs to $\cC$. We thank Tam\'as K\'atay for pointing out that we need to assume for our results that the Fra\"iss\'e limit is not finitely generated.}

As a common universe for our structures, we fix some countably infinite set, for example $\omega$.
Let $\bS$ be the set of all structures $S$ defined on the universe $\omega$ with the following two properties:
\begin{itemize}
\item[(i)] the age of $S$ is contained in $\cC$ and
\item[(ii)] $S$ is not finitely generated.
\end{itemize}
 We introduce the following topology on $\bS$.  Take some finitely generated structure $B$ whose isomorphism type is contained in $\cC$ and such that $\dom B\subset \omega$.
Let $\cO_B$ be the set of all structures $A\in \bS$ whose restriction to $\dom B$ coincides with $B$.
\begin{lemma}\label{lem:O}
$\cO_B$ is non-empty if and only if $\omega \backslash \dom B$ is infinite.
\end{lemma}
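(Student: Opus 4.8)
The plan is to prove the two implications separately, the forward one being a short generation argument and the backward one a chain construction inside $\bS$. For the forward direction, suppose $A\in\cO_B$ and write $B=\langle F\rangle$ for a finite $F\subseteq\dom B$ (possible since the isomorphism type of $B$ lies in $\cC$, hence $B$ is finitely generated). Since $A|_{\dom B}=B$, the structure $B$ is a substructure of $A$ and $A$'s operations map $\dom B$ into $\dom B$; thus the substructure of $A$ generated by $F$ stays inside $\dom B$ and equals $\langle F\rangle=\dom B$. If $\omega\setminus\dom B$ were finite, then $F\cup(\omega\setminus\dom B)$ would be a finite subset of $\omega$ generating a substructure of $A$ containing $\dom B\cup(\omega\setminus\dom B)=\omega$, so $A$ would be finitely generated, contradicting property (ii) in the definition of $\bS$. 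Hence $\omega\setminus\dom B$ is infinite.

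For the backward direction, assume $\omega\setminus\dom B$ is infinite and construct some $A\in\cO_B$. Enumerate $\omega=\{a_0,a_1,\dots\}$ and build a chain $B=C_0\subseteq C_1\subseteq C_2\subseteq\cdots$ of structures whose isomorphism types lie in $\cC$, with $\dom C_n\subseteq\omega$, such that $a_n\in\dom C_{n+1}$ and each inclusion $\dom C_n\subsetneq\dom C_{n+1}$ is proper. Setting $A=\bigcup_n C_n$ then gives $\dom A=\omega$ and $A|_{\dom B}=B$. Its age is contained in $\cC$: any finitely generated substructure of $A$ has its (finitely many) generators in some $\dom C_N$, hence, $C_N$ being a substructure of $A$, it is a substructure of $C_N$ and so belongs to $\cC$ by (HP). Finally $A$ is not finitely generated: if $A=\langle F\rangle$ with $F$ finite, then $F\subseteq\dom C_N$ for some $N$, whence $\langle F\rangle\subseteq\dom C_N$ and $A=C_N$, contradicting $\dom C_N\subsetneq\dom C_{N+1}\subseteq\dom A$. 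Thus $A\in\bS$ and $A\in\cO_B$.

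The heart of the backward direction, and the step I expect to be the main obstacle, is the inductive construction of the chain: given $C_n\in\cC$ with $\dom C_n\subsetneq\omega$, I must produce a proper extension $C_n\subsetneq C'$ whose type is in $\cC$, realized on $\omega$ so that $a_n\in\dom C'$ while keeping $\omega\setminus\dom C'$ infinite for later steps. The placement is routine bookkeeping (an absorption/back-and-forth argument that dovetails the $a_n$ into $\omega\setminus\dom C_n$, using that the complement is infinite and arranging each extension to adjoin only finitely many new elements), so the genuine content is the \emph{existence} of a proper extension in $\cC$. Equivalently, I need that an infinite Fra\"iss\'e class has no maximal element under embeddability.

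To establish this last point I would argue by contradiction: suppose $A\in\cC$ is maximal, i.e.\ every embedding of $A$ into a structure of $\cC$ is onto. By (JEP), for any $E\in\cC$ there is $D\in\cC$ receiving both $A$ and $E$; maximality forces $A\cong D$, so $E$ embeds into $A$, giving $\cC=\operatorname{age}(A)$. Next, $A$ admits no proper self-embedding: if $\phi\colon A\to A$ were injective but not onto, then for a finitely generated $A'$ with $\phi(A)\subsetneq A'\subseteq A$ the preimage $\phi^{-1}(A')$ would be a structure in $\cC$ properly extending $A$, contradicting maximality. Thus $A$ is co-Hopfian, and together with (AP) this makes $A$ homogeneous, so $A$ would itself be a finitely generated Fra\"iss\'e limit of $\cC$. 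The remaining and most delicate task is to derive a contradiction from $\operatorname{age}(A)=\cC$ being infinite while $A$ is finitely generated and co-Hopfian; I would squeeze this out of amalgamating several copies of $A$ over a proper finitely generated substructure $A_0\subsetneq A$ and comparing the forced images, exploiting that the infinitely many isomorphism types in $\cC$ cannot all be accommodated by a single finitely generated rigidly-positioned $A$. This is the point where the hypothesis that $\cC$ is infinite is essential and where the argument must be carried out with care.
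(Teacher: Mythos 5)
Your forward implication is correct and coincides with the paper's (if $\omega\setminus\dom B$ is finite, the generators of $B$ together with $\omega\setminus\dom B$ generate the whole structure), and your chain construction for the backward implication is the same strategy the paper uses. The genuine gap is in the one step you yourself flag as ``the most delicate task'': proving that every $C\in\cC$ admits a proper extension inside $\cC$ (equivalently, that an infinite Fra\"iss\'e class has no maximal element under embeddability). You correctly dispose of the case $\operatorname{age}(A)\subsetneq\cC$ via (JEP) with a structure not in the age --- exactly as the paper does --- but in the remaining case $\operatorname{age}(A)=\cC$ you only announce a plan (``amalgamating several copies of $A$ over a proper finitely generated substructure and comparing the forced images'') without carrying it out. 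Since this is precisely where the hypotheses that $\cC$ is infinite and has (AP) must do their work, the proof is incomplete as written. Two smaller defects in the sketch: the set $\phi^{-1}(A')$ is all of $A$ once $\phi(A)\subseteq A'$, so it is not the witness you want (the correct witness is $A'$ itself, which lies in $\cC$ by (HP) and properly extends the copy $\phi(A)\cong A$); and the claim that co-Hopfianness together with (AP) ``makes $A$ homogeneous'' is neither justified nor needed.

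For comparison, the paper settles the case $\operatorname{age}(C)=\cC$ directly rather than by contradiction: since $\cC$ is infinite, $C$ has a finitely generated substructure of a different isomorphism type, hence a proper substructure $F\subsetneqq C$, and one amalgamates $C$ with itself along $F$ to obtain a member of $\cC$ strictly larger than $C$. Note also that for the chain construction ``strictly larger'' only needs to mean ``properly containing a copy of $C$ as a substructure''; the extension is allowed to be isomorphic to $C$. Your reduction to non-maximality is therefore the right one, but without an actual argument in the case $\operatorname{age}(A)=\cC$ the lemma is not proved.
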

\begin{proof}
If $\omega \backslash \dom B$ is finite, then any structure $S$ with $\dom S=\omega$ which coincides with $B$ on $\dom B$ is finitely generated by the generators of $B$ together   with $\omega \backslash \dom B$. Hence, $S\notin \bS$ and $\cO_B$ is empty.

Suppose that $\omega \backslash \dom B$ is infinite. First observe that every structure $C\in \cC$ can be embedded into a strictly larger structure $C_1\in \cC$. Indeed, if the age of $C$ is strictly smaller than $\cC$, we can take some structure $A\in \cC$ not contained in the age of $C$, and use the joint embedding property to construct a new structure $C_1$ containing copies of $A$ and $C$. Clearly, $C_1$ is strictly larger than $C$ because $A$ can be embedded into $C_1$ but not into $C$.  On the other hand, if the age of $C$ equals $\cC$ then $C$ cannot be homogeneous because $C$ is not the Fra\"iss\'e limit. Therefore we can find isomorphic finitely generated substructures $A$ and $B$ of $C$ and an isomorphism $f:A\longrightarrow B$ which does not extend to an automorphism of $C$. Thus, there is an element $a\in C$ such that there is no $b\in C$ such that $f$ extends to the substructure  $D$ generated by $A$ and $a$. Since $A$ and $B$ are isomorphic, we can amalgamate $C$ with $D$ over $B$ to obtain a structure strictly larger than $C$.

Using the above, we can construct a sequence of embedded structures $B = B_1 \subsetneqq B_2 \subsetneqq \ldots$ such that $B_n\in\cC$ for all $n\in\N$.  If $\omega \backslash \dom B$ is infinite, we can embed these structures into $\omega$ such that  $\dom B = \dom B_1 \subsetneqq \dom B_2 \subsetneqq \ldots\subset \omega$ and the union of these domains is $\omega$. Now let $S$ be the inductive limit of this sequence. The age of $S$ is contained in $\cC$ because any finitely generated substructure of $S$ is contained in some $B_n$. For the same reason and since $\dom B_n \neq \omega$, the structure $S$ is not finitely generated. Hence, $S\in \bS$ and $S\in  \cO_B$.
\end{proof}

Those sets of the form $\cO_B$ which are non-empty form a base of some topology on $\bS$ which is denoted by $\cT$. Indeed, if $A \in \cO_{B_1}\cap \cO_{B_2}$ for some sets $\cO_{B_1}$ and $\cO_{B_2}$, then $A\in \cO_B \subset \cO_{B_1}\cap \cO_{B_2}$, where $B$ is the structure generated by $B_1\cup B_2$ inside $A$. Note that the isomorphism type of $B$ is contained in $\cC$ because $B$ is a finitely generated substructure of $A\in \bS$.  Observe also that the sets $\cO_B$ cover $\bS$ because every structure $A\in \bS$ has some finitely generated substructure.

\begin{proposition}\label{prop:baire}
The topological space $(\bS, \cT)$ has the Baire property, namely if $U_1,U_2,\ldots$ are open dense subsets of $\bS$, then their intersection is dense in $\bS$.
\end{proposition}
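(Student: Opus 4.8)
The plan is to prove the Baire property directly by a fusion (diagonalization) argument, exactly as one proves the classical Baire Category Theorem for a complete metric space, with the chain of basic open sets $\cO_B$ playing the role of nested balls. Since the sets $\cO_B$ form a base, density of a set is equivalent to the set meeting every nonempty $\cO_{B_0}$; so it suffices to fix an arbitrary nonempty basic open set $\cO_{B_0}$ together with open dense sets $U_1, U_2, \ldots$ and produce a structure in $\cO_{B_0}\cap\bigcap_{n\ge 1}U_n$. I would construct an increasing chain of finitely generated structures $B_0\subseteq B_1\subseteq B_2\subseteq\cdots$ (each with isomorphism type in $\cC$, with $\dom B_n\subseteq\omega$ and $\omega\setminus\dom B_n$ infinite) satisfying $\cO_{B_n}\subseteq U_n$ for all $n\ge 1$, and then take the increasing union $S=\bigcup_n B_n$ as the desired point. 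The basic inclusion I will use repeatedly is that $\cO_{B'}\subseteq\cO_B$ holds precisely when $B$ is a substructure of $B'$ (i.e. $B=B'|_{\dom B}$), which is how one passes along the chain.

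The inductive step at stage $n$ has two parts. First, since $U_n$ is open and dense, $U_n\cap\cO_{B_{n-1}}$ is a nonempty open set and hence contains some basic set $\cO_{B'}$ with $\cO_{B'}\subseteq U_n\cap\cO_{B_{n-1}}$; the inclusion $\cO_{B'}\subseteq\cO_{B_{n-1}}$ gives $B_{n-1}\subseteq B'$, the set $\omega\setminus\dom B'$ is infinite because $\cO_{B'}$ is a (nonempty) member of the base, and $\cO_{B'}\subseteq U_n$. Second, to guarantee that the eventual union exhausts $\omega$ and is not finitely generated, I fix an enumeration $\omega=\{a_0,a_1,\ldots\}$ and enlarge $B'$ to a strictly larger finitely generated structure $B_n\supsetneq B'$ with $a_{n-1}\in\dom B_n$, keeping the isomorphism type of $B_n$ in $\cC$ and $\omega\setminus\dom B_n$ infinite. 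This enlargement is possible by the mechanism already used in Lemma~\ref{lem:O}: every member of $\cC$ embeds into a strictly larger member of $\cC$, and the new domain points may be placed freely in $\omega$, so one realizes such a strictly larger structure on $\omega$ with $a_{n-1}$ added to the domain while leaving an infinite complement. Since $B'\subseteq B_n$ we retain $\cO_{B_n}\subseteq\cO_{B'}\subseteq U_n$, and since $B_{n-1}\subseteq B'\subsetneq B_n$ the chain is strictly increasing.

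Finally I would set $S=\bigcup_n B_n$ and check $S\in\bS$. Its domain is all of $\omega$ because the bookkeeping forces $\{a_0,\ldots,a_{n-1}\}\subseteq\dom B_n$, so $\bigcup_n\dom B_n=\omega$; its age is contained in $\cC$ because any finitely generated substructure of $S$ is generated by a finite set, which lies in some $\dom B_N$, and the substructure it generates in $S$ is already contained in the substructure $B_N$, whose age is in $\cC$ by (HP); and $S$ is not finitely generated because the chain is strictly increasing, so a finite generating set would lie in some $B_N$, forcing $S\subseteq B_N$, a contradiction. As $B_n\subseteq B_m$ for $m\ge n$ we get $S|_{\dom B_n}=B_n$, hence $S\in\cO_{B_n}\subseteq U_n$ for every $n\ge 1$ and $S\in\cO_{B_0}$, which completes the argument. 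I expect the main obstacle to be precisely this simultaneous bookkeeping in the second part of the inductive step: because ``finitely generated'' does not mean ``finite domain,'' adding a single point $a_{n-1}$ of $\omega$ may force infinitely many new domain elements, so one must verify that $a_{n-1}$ can be captured, the isomorphism type kept inside $\cC$, strict growth ensured, \emph{and} the complement kept infinite, all at once — this is the point where the extra topology on structures with infinite complement does the work that completeness does in the metric setting.
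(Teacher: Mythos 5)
Your proposal follows essentially the same route as the paper: a nested chain of basic open sets $\cO_{B_0}\supseteq\cO_{B_1}\supseteq\cdots$ with $\cO_{B_n}\subseteq U_n$, bookkeeping to force the union of the domains to be all of $\omega$, and the inductive limit as the desired point of $\cO_{B_0}\cap\bigcap_n U_n$. The verification that the limit lies in $\bS$ (age contained in $\cC$, not finitely generated) and your handling of the enlargement step via the mechanism of Lemma~\ref{lem:O} are both fine.

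There is, however, one step you assert that is not justified and is in fact false in general: the claim that $\cO_{B'}\subseteq\cO_B$ holds \emph{precisely} when $B$ is a substructure of $B'$. The implication $B\subseteq B'\Rightarrow\cO_{B'}\subseteq\cO_B$ is correct and is all you need when passing from $B'$ to $B_n$; but the converse, which you invoke to conclude $B_{n-1}\subseteq B'$ from $\cO_{B'}\subseteq U_n\cap\cO_{B_{n-1}}$, can fail, since distinct structures $B$ can define the same basic open set (already for a one-element structure in the class of finite linear orders one has $\cO_B=\bS$). Without $B_{n-1}\subseteq B'$ your sequence need not be a chain and the inductive limit is not defined. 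The fix is exactly the paper's move: pick a witness $A_0\in\cO_{B'}\subseteq U_n\cap\cO_{B_{n-1}}$ and replace $B'$ by the substructure of $A_0$ generated by the generators of $B_{n-1}$, the generators of $B'$, and the new point $a_{n-1}$; this structure literally contains $B_{n-1}$, has isomorphism type in $\cC$, has infinite complement in $\omega$ because $A_0$ is not finitely generated, and still satisfies $\cO_{B'}\subseteq\cO_{B'_{\mathrm{old}}}\subseteq U_n$. With that replacement (which also absorbs your separate enlargement step), your argument coincides with the one in the paper.
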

\begin{proof}
Let $B_0$ be  some structure whose isomorphism type is contained in $\cC$ and such that $\dom B_0 \subset \omega$, $|\omega \backslash \dom B_0| = \infty$. We have to exhibit a structure $A\in  \bS$ that is contained in $U_1,U_2,\ldots$ and in $\cO_{B_0}$. Since $U_1$ is dense, there is some  $A_0\in \cO_{B_0}\cap U_1\subset \bS$. Moreover, since the set $\cO_{B_0}\cap U_1$ is open, there is some basic open set $\cO_{B_1}$ such that $A_0\in \cO_{B_1} \subset \cO_{B_0}\cap U_1$. Without loss of generality we may assume that $\dom B_0\subset \dom B_1$ and $1\in \dom B_1$ since otherwise we could replace $B_1$ by the substructure of $A_0$ generated by $B_0\cup B_1\cup \{1\}$. Note also that $|\omega \backslash B_1| = \infty$ because $A_0\in\bS$ is not finitely generated.  Proceeding in this way, we can construct a sequence of embedded structures $B_0, B_1, \ldots$ whose isomorphism types are contained in $\cC$, such that $\dom B_0 \subset \dom B_1 \subset \ldots$  and   $\cO_{B_n} \subset \cO_{B_{n-1}} \cap U_n$, $n\in \dom B_n$, $|\omega\backslash \dom B_n| = \infty$ for all $n=1,2,\ldots$. Let $A$ be a structure obtained as the inductive limit of $B_0,B_1,\ldots$. The universe of $A$ is $\omega$ since $n\in \dom B_n$ for all $n$. Then, $A\in \bS$ because any finitely generated substructure of $A$ is a substructure of some $B_n$ and hence the age of $A$ is contained in $\cC$. Also,  $A$ itself is not finitely generated because $A\neq B_n$, which follows from $|\omega\backslash \dom B_n| = \infty$.  Clearly, $A$ is contained in $\cO_{B_n}\subset U_n$ for every $n=1,2,\ldots$ and $A\in \cO_{B_0}$ by construction.
\end{proof}

\subsection{Main result}
Recall that a subset of a topological space is called comeager if it  can be represented as a countable intersection of subsets with dense interior.
\begin{theorem}\label{thm:main}
Let $\cC$ be a Fra\"iss\'e class of finitely generated $L$-structures whose Fra\"iss\'e limit does not belong to $\cC$ and let  $(\bS, \cT)$ be the topological space of all countable not finitely generated $L$-structures on $\omega$ whose age is contained in $\cC$ and with the topology $\cT$ defined above.
Then, the set $\bF$ of all structures $M\in \bS$ which are universal and homogeneous for $\cC$ is comeager in $\bS$.
\end{theorem}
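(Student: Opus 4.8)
The plan is to exhibit $\bF$ as a countable intersection of dense open subsets of $(\bS,\cT)$ and then quote the Baire property (Proposition~\ref{prop:baire}). By Remark~\ref{rem:saturated} and the uniqueness in Fra\"iss\'e's theorem, $M\in\bS$ belongs to $\bF$ exactly when it is \emph{universal} (every $C\in\cC$ embeds into $M$; as the age of $M$ is automatically contained in $\cC$, this means the age equals $\cC$) and \emph{$\cC$-saturated} (for every $A\subseteq B$ in $\cC$, every embedding $A\to M$ extends to $B\to M$). Two observations make a countable bookkeeping possible even though structures in $\cC$ are only finitely generated. First, if $A$ is generated by a tuple $\bar a=(a_1,\dots,a_m)$, an embedding $h\colon A\to M$ is completely determined by the single tuple $h(\bar a)\in\omega^{m}$, of which there are only countably many; this replaces the finite restrictions available in the relational case. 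Second, every finitely generated substructure $D$ of an $M\in\bS$ has co-infinite domain (otherwise $M$ would be generated by the generators of $D$ together with the finite set $\omega\setminus\dom D$, contradicting that $M$ is not finitely generated), so generated substructures always yield legitimate nonempty basic sets $\cO_D$ via Lemma~\ref{lem:O}. I therefore set $\bF=\bigcap_{C}V_C\cap\bigcap_{A\subseteq B}\bigcap_{\bar c\in\omega^{m}}G^{A\subseteq B}_{\bar c}$, where $V_C=\{M\in\bS: C\text{ embeds into }M\}$ and $G^{A\subseteq B}_{\bar c}$ consists of those $M$ for which, if $\bar a\mapsto\bar c$ defines an embedding $A\to M$, then it extends to an embedding $B\to M$. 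Ranging over all $\bar c$ and all $A\subseteq B$ makes the second family express $\cC$-saturation, and ranging over all $C$ makes the first express universality.

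Openness rests on finite witnesses. If $C$ embeds into $M$ with image $D$, then $D$ has co-infinite domain and every structure in $\cO_D$ contains $D$, so $\cO_D\subseteq V_C$. For $G^{A\subseteq B}_{\bar c}$ there are two ways to belong, each decided on a finite generated substructure: either $\bar a\mapsto\bar c$ fails to be an embedding of $A$, a failure already visible on the substructure $D$ generated by $\bar c$ (some atomic formula evaluated at the terms in $\bar c$ disagrees with its value at $\bar a$ in $A$), so that $\cO_D\subseteq G^{A\subseteq B}_{\bar c}$ holds vacuously; or an extension $j\colon B\to M$ exists, and then $\cO_{j(B)}\subseteq G^{A\subseteq B}_{\bar c}$, since every member of $\cO_{j(B)}$ still contains $j(B)$ and hence still realises the extension over $\bar c$. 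Working with generated substructures here, rather than naive restrictions, is exactly what accommodates function symbols.

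Density is where the amalgamation axioms and the chain construction from the proof of Lemma~\ref{lem:O} enter. Fix a nonempty basic set $\cO_{B'}$. For $V_C$, apply (JEP) to $B'$ and $C$ to get $E\in\cC$ embedding both, realise $E$ on $\omega$ so that it restricts to $B'$ on $\dom B'$ and leaves infinitely many points free (possible since $\omega\setminus\dom B'$ is infinite and $E$ is countable), and grow it to a non-finitely-generated $M\in\bS$ as in Lemma~\ref{lem:O}; then $M\in\cO_{B'}\cap V_C$. For $G^{A\subseteq B}_{\bar c}$, consider the substructure $S$ of $B'$ generated by the entries of $\bar c$ that lie in $\dom B'$; since $B'$ is a substructure of every $M\in\cO_{B'}$, this $S$ is forced. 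If $S$ is not isomorphic to the corresponding generated substructure of $A$, then $\bar a\mapsto\bar c$ can embed $A$ into no $M\in\cO_{B'}$, so any $M\in\cO_{B'}$ lies in $G^{A\subseteq B}_{\bar c}$ vacuously. Otherwise $\bar c$ is compatible with $A$, and I amalgamate $B'$ with $B$ over this common substructure by (AP), realise the amalgam on $\omega$ extending $B'$ and placing $\bar a\mapsto\bar c$, and again grow it to some $M\in\bS$; there $\bar c$ embeds $A$ and extends to $B$. In either case $\cO_{B'}$ meets $G^{A\subseteq B}_{\bar c}$.

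Proposition~\ref{prop:baire} then shows the countable intersection $\bF$ is dense, hence comeager and in particular nonempty, which simultaneously re-proves the existence of the Fra\"iss\'e limit. The crux I anticipate is the density of the saturation sets: performing the abstract amalgamation supplied by (AP) while simultaneously realising it on the fixed universe $\omega$ so that it truly restricts to $B'$ on $\dom B'$ and places the prescribed copy of $A$ on $\bar c$, and then extending the result to a genuine element of $\bS$ (age in $\cC$, not finitely generated). The bookkeeping that makes all of this go through for finitely generated rather than merely finite structures—coding embeddings by the images of finite generating tuples and working throughout with generated substructures—is the technical heart separating the present argument from the classical relational one.
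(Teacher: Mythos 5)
Your proposal is correct and follows essentially the same route as the paper's proof: the same decomposition of $\bF$ into the universality sets and the saturation sets indexed by $(A\subseteq B,\bar c)$, the same openness arguments via generated substructures as finite witnesses, and the same density argument distinguishing whether the entries of $\bar c$ lying in $\dom B'$ generate a substructure compatible with $A$ before amalgamating. No substantive differences to report.
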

By Proposition~\ref{prop:baire} and Remark~\ref{rem:saturated}, this immediately implies  the  classical Fr\"aiss\'e theorem:
\begin{corollary}
Universal and homogeneous structures exist and are unique up to isomorphism.
\end{corollary}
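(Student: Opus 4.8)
The plan is to deduce the two halves of the statement separately from the machinery already in place, handling existence via the Baire property and uniqueness via back-and-forth.

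For existence, I would unwind the definition of comeager. By Theorem~\ref{thm:main} we may write $\bF = \bigcap_{n} V_n$, where each $V_n$ has dense interior. Setting $U_n = \Int(V_n)$, the sets $U_n$ are open and dense in $\bS$, so Proposition~\ref{prop:baire} gives that $\bigcap_n U_n$ is dense. The space $\bS$ is nonempty: choosing any $B$ whose isomorphism type lies in $\cC$ and realizing it on $\omega$ so that $\omega \setminus \dom B$ is infinite, Lemma~\ref{lem:O} yields $\cO_B \neq \emptyset$. Hence the dense set $\bigcap_n U_n$ is nonempty, and since $\bigcap_n U_n \subseteq \bigcap_n V_n = \bF$, we conclude $\bF \neq \emptyset$. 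Every $M \in \bF$ is by definition universal and homogeneous for $\cC$, so such structures exist.

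For uniqueness, I would verify that an arbitrary universal homogeneous $M$ meets the two conditions of Remark~\ref{rem:saturated} and then invoke the uniqueness clause stated there. Condition~1 (the age of $M$ equals $\cC$) is precisely universality. For condition~2 ($\cC$-saturation), let $A \subset B$ with $A,B \in \cC$ and let $h \colon A \to M$ be an embedding; I must extend $h$ to an embedding of $B$ into $M$. Since the age of $M$ is $\cC$ and $B \in \cC$, there is some embedding $g \colon B \to M$. The two embeddings $g|_A$ and $h$ of $A$ give an isomorphism $h \circ (g|_A)^{-1}$ between the finitely generated substructures $g(A)$ and $h(A)$ of $M$; by homogeneity it extends to an automorphism $\sigma$ of $M$, and then $\sigma \circ g \colon B \to M$ is an embedding extending $h$, as required.

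With both conditions verified, the back-and-forth argument recorded in Remark~\ref{rem:saturated} shows that any two universal homogeneous structures are isomorphic, completing the uniqueness half. The main obstacle is modest: the existence half is immediate once the definition of comeager is read against Proposition~\ref{prop:baire}, and the only genuine content in the uniqueness half is the implication that universality together with homogeneity forces $\cC$-saturation, i.e.\ that the extension-of-partial-isomorphisms formulation of homogeneity delivers the extension property for embeddings of $\cC$-structures. Once this is established, uniqueness is handed to us directly by the standard back-and-forth already cited in Remark~\ref{rem:saturated}.
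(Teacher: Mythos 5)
Your proposal is correct and follows essentially the same route as the paper, which deduces the corollary directly from Proposition~\ref{prop:baire} (comeager sets in the nonempty Baire space $\bS$ are dense, hence nonempty) and Remark~\ref{rem:saturated} (back-and-forth uniqueness); you merely unwind the definitions and spell out the one step the paper delegates to the cited equivalence in Remark~\ref{rem:saturated}, namely that universality plus homogeneity yields $\cC$-saturation via composing an embedding $g\colon B\to M$ with an automorphism extending $h\circ (g|_A)^{-1}$. Both halves of your argument are sound, including the check via Lemma~\ref{lem:O} that $\bS\neq\emptyset$.
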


\begin{proof}[Proof of Theorem~\ref{thm:main}]
For a structure  $A\in \cC$ denote by $\bF_A$ the set of all structures in $\bS$ which contain an isomorphic copy of $A$ as a substructure. For a quadruple $(A_1,A_2,a,b)$, where $A_1\subset A_2$ are structures in $\cC$, $a$ is a finite tuple of elements generating $A_1$, and $b$ is a tuple of elements of $\omega$ of the same length as $a$, denote by $\bF_{A_1,A_2,a,b}$ the set of all structures $S\in\bS$ such that one of the following holds:
\begin{itemize}
\item[(i)]  either the map $a\mapsto b$ cannot be extended to an embedding of $A_1$ into $S$ or
\item[(ii)] there is an embedding $\varphi:A_2\to S$ extending $a \mapsto b$.
\end{itemize}
By Remark~\ref{rem:saturated}, we can write
$$
\bF = \left(\bigcap_{A} \bF_A \right) \cap \left(\bigcap_{(A_1,A_2,a,b)} \bF_{A_1,A_2,a,b} \right).
$$
It suffices to show that the sets $\bF_A$ and $\bF_{A_1,A_2,a,b}$ are open and dense.

\vspace*{2mm}
\noindent
\textit{Claim 1:} The set $\bF_A$ is open. Consider a structure $S\in \bF_A$. That is, $S$ contains a substructure $A'$ isomorphic to $A$. Then, $S\in \cO_{A'} \subset \bF_A$ by definition, thus proving the claim.

\vspace*{2mm}
\noindent
\textit{Claim 2:} The set $\bF_A$ is dense. Consider a non-empty open set of the form $\cO_B$. Note that $|\omega\backslash \dom B| = \infty$ by Lemma~\ref{lem:O}.  By the joint embedding property, we can extend $B$ to a larger structure $D$ (with $\dom D\subset \omega$, $|\omega \backslash \dom D| =\infty$ and $D\in \cC$) which contains also an isomorphic copy of $A$. Furthermore, we can extend $D$ to a structure $S\in \bS$ with universe $\omega$; see Lemma~\ref{lem:O}. By construction, $S\in \cO_D\subset \cO_B \cap \bF_A$, thus proving the claim.

\vspace*{2mm}
\noindent
\textit{Claim 3:} The set $\bF_{A_1,A_2,a,b}$ is open. Let $S\in \bF_{A_1,A_2,a,b}$ be some structure. We consider two cases.

\vspace*{2mm}
\noindent
Case (i): The map $a\mapsto b$ cannot be extended to an embedding of $A_1$ into $S$. Let $B$ be the structure generated by the tuple $b$ in $S$. Clearly, $B\in \cC$. Then, for every $S'\in \cO_B$, the map $a\mapsto b$ cannot be extended to an embedding of $A_1$ into $S'$.  Hence, $S\in \cO_B \subset \bF_{A_1,A_2,a,b}$.

\vspace*{2mm}
\noindent
Case (ii): $\varphi$ is an embedding of  $A_2$ into $S$ that maps $a$ to $b$. Then, $\varphi(A_2)\in \cC$ is a substructure of $S$. The open set $\cO_{\varphi(A_2)}$ contains $S$ and is contained in $\bF_{A_1,A_2,a,b}$, thus proving the claim.

\vspace*{2mm}
\noindent
\textit{Claim 4:} The set $\bF_{A_1,A_2,a,b}$ is dense. Consider a non-empty open set of the form $\cO_D$. Note that $|\omega \backslash \dom D| = \infty$ by Lemma~\ref{lem:O}.   Our aim is to construct a structure $S\in \bF_{A_1,A_2,a,b} \cap \cO_D$.
Write $a= (a_1,\ldots,a_n)$ and $b= (b_1,\ldots,b_n)$. Further, let $b_1,\ldots,b_m$ be those elements from the tuple $b$ that are contained in $D$. Denote the substructure of $D$ generated by $b_1,\ldots,b_m$ by $D'$. Similarly, let  $A_1'$ be the substructure of $A_1$ generated by $a_1,\ldots,a_m$. Consider the following two cases.

\vspace*{2mm}
\noindent
Case (a): The map $a_1\mapsto b_1, \ldots, a_m \mapsto b_m$ cannot be extended to an isomorphism between $A_1'$ and $D'$. Then, for an arbitrary structure $S\in \cO_D$, the map $a\mapsto b$ cannot be extended to an embedding of $A_1$ into $S$, thus proving that $S\in \bF_{A_1,A_2,a,b} \cap \cO_D$.

\vspace*{2mm}
\noindent
Case (b):
There is an isomorphism between $A_1'$ and $D'$ extending the map $a_1\mapsto b_1, \ldots, a_m \mapsto b_m$. Amalgamate $A_2$ and $D$ along their common part $A_1'$ (identified with $D'$) to get a structure $F$. We denote by $\varphi$ the embedding of $A_2$ into $F$. By definition, $\varphi(a_1)=b_1,\ldots,\varphi(a_m)=b_m$. Using the fact that $|\omega \backslash \dom D| = \infty$,  we can choose $\dom F\subset \omega$  such that $F$ extends $D$, $|\omega \backslash \dom F| = \infty$ and $\varphi(a_{m+1}) = b_{m+1}, \ldots, \varphi(a_{n}) = b_{n}$. Finally, extend $F$ to a structure $S\in \bS$; see Lemma~\ref{lem:O}. The map $a\mapsto b$ extends to the embedding $\varphi: A_2 \to S$, thus showing that $S\in \bF_{A_1,A_2,a,b} \cap \cO_D$.
\end{proof}

\subsection{Remark on the topology} \label{subsec:rem_topology}
Assuming that all structures in $\cC$ are finite and the language $L$ is finite, it is possible to give an alternative description of the topology $\cT$ introduced above. Consider on $\bS$ the topology $\cT_0$ generated by the basic open sets $\cU_\varphi$ of the following form. Let $L(\omega)$ denote a language obtained from $L$ by adjoining a constant symbol for every element in $\omega$. For a quantifier-free $L(\omega)$-sentence $\phi$ we put
\[
\cU_\phi=\{M\in\bS\colon M\models\phi\}.
\]
It is easy to check that the sets of the form $\cU_\varphi$ (after removing those sets which are empty) form a base of some topology denoted by $\cT_0$. Indeed, if $A\in \cU_{\varphi_1} \cap \cU_{\varphi_2}$, then $A\in \cU_{\varphi_1\wedge \varphi_2} \subset \cU_{\varphi_1} \cap \cU_{\varphi_2}$. Note that the sets $\cU_\phi$ are clopen because the complement of $\cU_{\varphi}$ is $\cU_{\neg \varphi}$. In fact, $\cT_0$ is just the topology of pointwise convergence of structures. Namely, a sequence $A_1,A_2,\ldots\in\bS$ of structures on $\omega$ converges in this topology to an structure $A_\infty\in\bS$ iff for every arity $m\in\N$,  every $m$-ary relation symbol $r$, every $m$-variable function symbol $f$, and every constant symbol $c$ from the language $L$ with their respective realizations $r_n: \omega^m \to \{0,1\}$, $f_n:\omega^m\to\omega$, $c_n\in\omega$ in $A_n$, where $n\in\N\cup\{\infty\}$, we have
$$
\lim_{n\to\infty} r_n(x)= r_\infty(x),
\;\;
\lim_{n\to\infty} f_n(x)= f_\infty(x),
\;\;
\lim_{n\to\infty} c_n = c_\infty,
$$
for all tuples $x\in\omega^m$.

\begin{proposition}\label{prop:top_equal}
If the language $L$ is finite and all structures in $\cC$ are finite, then the topologies $\cT$ and $\cT_0$ coincide.
\end{proposition}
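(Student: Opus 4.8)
The plan is to show that the two bases are mutually subordinate, i.e.\ that every basic set $\cO_B$ of $\cT$ is $\cT_0$-open and every basic set $\cU_\varphi$ of $\cT_0$ is $\cT$-open; since both collections are bases, this yields $\cT=\cT_0$. The two hypotheses enter at opposite ends: finiteness of $L$ is what lets me encode a membership condition by a single finite sentence, while finiteness of the structures in $\cC$ is what guarantees that the relevant generated substructures are finite.

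First I would prove $\cT\subseteq\cT_0$ by exhibiting, for each admissible $B$, a quantifier-free $L(\omega)$-sentence $\varphi_B$ with $\cO_B=\cU_{\varphi_B}$. Since $\cC$ consists of finite structures, $\dom B=\{d_1,\dots,d_k\}$ is finite, and the condition that the restriction of $A$ to $\dom B$ equal $B$ unfolds into finitely many atomic requirements: $c=d_i$ for each constant symbol $c$ of $L$ (with $d_i:=c^B$); $f(d_{i_1},\dots,d_{i_m})=d_j$ for each function symbol $f$ and each tuple from $\dom B$ (with $d_j:=f^B(d_{i_1},\dots,d_{i_m})$); and $r(d_{i_1},\dots,d_{i_m})$ or its negation for each relation symbol $r$ and each tuple, according to whether the atom holds in $B$. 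Because $L$ is finite and $\dom B$ is finite there are only finitely many such atoms, so I take $\varphi_B$ to be their conjunction; one then checks directly that $A\models\varphi_B$ holds exactly when $B$ is the induced substructure of $A$ on $\dom B$, that is, when $A\in\cO_B$ (the function clauses in particular force $\dom B$ to be closed under the operations of $A$).

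For the reverse inclusion $\cT_0\subseteq\cT$ I would fix $\varphi$ and a structure $A\in\cU_\varphi$ and produce a basic $\cT$-neighbourhood of $A$ inside $\cU_\varphi$. Let $E_0\subset\omega$ be the finite set of elements named by the $\omega$-constants occurring in $\varphi$, and let $B$ be the substructure of $A$ generated by $E_0$. As $A$ has age contained in $\cC$ and $\cC$ consists of finite structures, $B$ is finite and its isomorphism type lies in $\cC$, so $\cO_B$ is a genuine basic set of $\cT$ containing $A$. The point is that $\cO_B\subseteq\cU_\varphi$: any $A'\in\cO_B$ agrees with $A$ on $\dom B$, and since $\dom B$ contains $E_0$ together with all interpretations of $L$-constants and is closed under the $L$-functions, every ground term occurring in the (variable-free, quantifier-free) sentence $\varphi$ evaluates to the same element of $\dom B$ in $A$ and in $A'$ --- a short induction on term complexity. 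Consequently $A$ and $A'$ assign the same truth value to every atomic subformula, hence to $\varphi$ itself, so $A'\models\varphi$.

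I expect the main obstacle to be conceptual rather than computational: pinning down precisely which finite piece of a structure controls the truth of a quantifier-free sentence, and verifying that this piece is simultaneously \emph{finite}, so that it furnishes a legitimate basic set $\cO_B$, and \emph{closed} under the operations of $L$, so that term evaluation cannot escape it. Both features rest on the hypotheses --- finiteness of the generated substructure comes from $\cC$ consisting of finite structures, and the finiteness of the defining sentence in the first step comes from $L$ being finite --- and it is worth remarking that dropping either hypothesis breaks the argument, since an infinite $\dom B$ would require an infinite conjunction in the first step.
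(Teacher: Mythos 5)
Your proposal is correct and follows essentially the same route as the paper's proof: one direction encodes the finite diagram of $B$ on $\dom B$ as a single quantifier-free $L(\omega)$-sentence (using finiteness of $L$ and of $B$), and the other passes from a sentence $\varphi$ to the substructure generated by its parameters (finite because $\cC$ consists of finite structures). You simply spell out the term-evaluation details that the paper leaves implicit.
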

\begin{proof}
If $B\in \cC$ is some (finite) structure with $\dom B\subset \omega$, then (due to finiteness of $L$) there is a quantifier-free formula $\varphi$ in the language $L(\omega)$ expressing that a given structure $A$ on the universe $\omega$ coincides with $B$ on $\dom B$. Hence, we have found a set $\cU_{\varphi}$ which is equal to $\cO_B$. Hence, $\cT$ is weaker than $\cT_0$. To prove the converse, let $\varphi$ be any quantifier-free formula in the language $L(\omega)$. Consider some structure $A\in \cU_\varphi$.  Let $x_1,\ldots,x_n\in\omega$ be the constants appearing in $\varphi$ and consider a substructure $B$ generated by $x_1,\ldots,x_n$ in $A$. Clearly, $A\in \cO_B \subset \cU_\varphi$. Hence, $\cT_0$ is weaker than $\cT$.
\end{proof}

\section{Examples} \label{sec:examples}

\subsection{The random graph}
Let $\cC$ be the class of all finite graphs. It is a Fra\"iss\'e class whose Fra\"iss\'e limit is the random graph~\cite{erdoes_renyi, rado}.
The space $\bS$ consists of all graphs with vertex set $\omega$ and can be identified with a closed subset of $\{0,1\}^{\omega\times \omega}$. The latter space is endowed with the product topology (equivalently, the  topology of pointwise convergence), and the topology $\cT$ on $\bS$ that we defined above is just the induced topology. Theorem~\ref{thm:main} implies that in the space $\bS$ of all graphs on the vertex set $\omega$, the graphs isomorphic to the random graph form a comeager subset. This result, together with its analogue for the Urysohn space, is due to Vershik~\cite{vershik_random_and_universal,vershik04,vershik06}. Similarly, Theorem~\ref{thm:main} applies to $K_n$-free graphs, bipartite graphs, hypergraphs etc.

\subsection{Hall's group}
Consider the language $L=\{e, \circ, \phantom{}^{-1}\}$ and let $\cC$ be the class of all finite groups.  It is a Fra\"iss\'e class whose Fra\"iss\'e limit is the Hall universal  group $H$ introduced in~\cite{hall}. The space $\bS$ consists of all locally finite groups on the underlying set $\omega$.   By Proposition~\ref{prop:top_equal}, the topology $\cT$ on $\bS$ can be described as follows: a sequence of group structures converges, if $e, a\circ b$, $a^{-1}$ become eventually constant in this sequence, for all $a,b\in \omega$. Theorem~\ref{thm:main} yields the following
\begin{corollary}
In the space $\bS$ of all locally finite groups on a fixed countable set, those groups isomorphic to the Hall group $H$ form a comeager subset.
\end{corollary}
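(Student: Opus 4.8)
The plan is to read this corollary as the special case of Theorem~\ref{thm:main} in which $L=\{e,\circ,{}^{-1}\}$ is the language of groups and $\cC$ is the class of all finite groups. Everything then reduces to three verifications: that $\cC$ is a Fra\"iss\'e class, that its Fra\"iss\'e limit is Hall's group $H$, and that the space $\bS$ associated with $\cC$ is exactly the space of locally finite groups on $\omega$.

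First I would check the three axioms of Definition~\ref{def:Fraisse}. Since a finitely generated substructure of a finite group is just a finite subgroup, (HP) is immediate, and since the direct product $A\times B$ of two finite groups contains copies of both, (JEP) holds. The amalgamation property (AP) is the only substantive point: given embeddings $h_i\colon A\hookrightarrow B_i$ of finite groups, I would identify $A$ with its images and pass to the amalgamated free product $G=B_1 *_A B_2$, into which $B_1$ and $B_2$ embed compatibly over $A$ by the classical normal-form theorem. As $A$ is finite, $G$ acts on its Bass--Serre tree with finite vertex stabilizers and is therefore virtually free, hence residually finite; so there is a finite-index normal subgroup $N\trianglelefteq G$ meeting the finite set $(B_1\cup B_2)\setminus\{e\}$ trivially, and $C=G/N$ is the required finite amalgam. (Alternatively one may simply cite the classical amalgamation theorem for finite groups.)

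Next, by the saturation characterization in Remark~\ref{rem:saturated}, the Fra\"iss\'e limit of $\cC$ is the unique countable locally finite group whose age consists of all finite groups and which is $\cC$-saturated. These are exactly the universality and homogeneity properties defining Hall's group in~\cite{hall}, so the Fra\"iss\'e limit of $\cC$ is $H$, and by uniqueness the set $\bF$ of universal homogeneous members of $\bS$ coincides with the set of groups isomorphic to $H$. Finally I would identify $\bS$: a structure $S$ on $\omega$ has age contained in $\cC$ precisely when every finitely generated subgroup of $S$ is finite, i.e.\ when $S$ is locally finite; and condition (ii) is then automatic, since a finitely generated locally finite group is finite whereas $\dom S=\omega$ is infinite. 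Thus $\bS$ is exactly the collection of locally finite groups on $\omega$, and Theorem~\ref{thm:main} yields that $\bF$ is comeager in $\bS$.

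I expect the amalgamation property for finite groups to be the only step carrying genuine content; the remaining identifications are bookkeeping together with the citation of Hall's construction.
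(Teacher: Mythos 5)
Your proposal is correct and follows exactly the route the paper takes: the corollary is obtained by specializing Theorem~\ref{thm:main} to the Fra\"iss\'e class of finite groups, whose limit is Hall's group. The paper simply asserts (with a citation to Hall) that this is a Fra\"iss\'e class with limit $H$ and that $\bS$ is the space of locally finite groups, whereas you supply the routine verifications explicitly, including a valid proof of amalgamation via residual finiteness of $B_1 *_A B_2$; this is filling in standard details rather than a different argument.
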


One may ask whether the isomorphism type of the Hall group is comeager even in the space $\bL$ of all (not necessarily locally finite) groups on $\omega$. It is natural to endow $\bL$  with the topology of pointwise convergence of group structures defined in the same way as in Section~\ref{subsec:rem_topology}. Let us show that $\bS$ is not dense in $\bL$, hence the answer is ``no''.  Let $M$ be a  finitely presented, simple, torsion-free (and hence infinite) group; see~\cite{burger_mozes}. Let $x_1,\ldots,x_k\neq e$ be the generators (assumed to be pairwise non-equal), $r_1(x_1,\ldots,x_k) = \ldots = r_s(x_1,\ldots,x_k)=e$ the relations defining $M$ and let $\phi$ be the sentence
\[
\left(\bigwedge_{i=1,\ldots, k} x_i\neq e \right) \wedge 
\left(\bigwedge_{j=1,\ldots, s}r_j(x_1,\ldots, x_k)=e\right)
\]
in the language $L(\omega)$. Then $\cU_\phi$ contains $M$ and hence is non-empty. But since $M$ is simple, any group satisfying $\phi$ contains a subgroup isomorphic to $M$. Thus, $\cU_\phi$ does not contain any locally finite group.

\subsection{Algebraic closure of \texorpdfstring{$\bF_p$}{Fp}}
Let $L=\{\underline{0}, \underline{1}, +, -, \cdot, \phantom{}^{-1}\}$ and let $\cC$ be the class of finite fields of fixed characteristic $p$. The Fra\"iss\'e limit of $\cC$ is the algebraic closure of $\bF_p$, denoted by $\overline{\bF}_p$.  Since $\overline{\bF}_p$ is locally finite and any quantifier-free sentence in the language of fields consistent with the theory of fields of fixed prime characteristic $p$ can be realized in $\overline{\bF}_p$ (see e.g. \cite[3.3.12, 3.3.13]{TZ}), this shows that the set $\bS$ of locally finite fields of characteristic $p$ is dense in the space $\bL$ of all countable fields of characteristic $p$ endowed with the topology of pointwise convergence of structures defined in the same way as in Section~\ref{subsec:rem_topology}.  Also, $\bS$ can be represented as an intersection of countably many open subsets of $\bL$ (which are also dense by the above). Indeed, for every $x\in\omega$ the set of all fields in $\bL$ for which $x$ is algebraic is open.
Using Theorem~\ref{thm:main} we obtain the following result:
\begin{corollary}
In the space $\bL$ of all countable fields of characteristic $p$ on a fixed countable universe, the set of fields isomorphic to $\overline{\bF}_p$ is comeager.
\end{corollary}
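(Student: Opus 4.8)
The plan is to deduce the statement from Theorem~\ref{thm:main} by transferring comeagerness from the subspace $\bS$ of locally finite fields up to the ambient space $\bL$. The text has already recorded the two structural facts that make this possible: $\bS$ is dense in $\bL$, and $\bS$ is a $G_\delta$ in $\bL$, being the intersection over $x\in\omega$ of the open dense sets $U_x=\{K\in\bL\colon x \text{ is algebraic over the prime subfield of } K\}$. Since every field isomorphic to $\overline{\bF}_p$ is locally finite, the set $\bF$ of fields isomorphic to $\overline{\bF}_p$ is the same whether it is computed inside $\bS$ or inside $\bL$, so it suffices to prove that $\bL\setminus\bF$ is meager in $\bL$.

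First I would check that the topology $\cT$ on $\bS$ agrees with the subspace topology induced from the pointwise-convergence topology on $\bL$. Because the field language $L$ is finite and all members of $\cC$ are finite, Proposition~\ref{prop:top_equal} gives $\cT=\cT_0$, and $\cT_0$ is by definition the restriction to $\bS$ of the pointwise-convergence topology on $\bL$. Hence $(\bS,\cT)$ is genuinely a topological subspace of $\bL$. Applying Theorem~\ref{thm:main} to the Fra\"iss\'e class $\cC$ of finite fields of characteristic $p$, whose limit is $\overline{\bF}_p$, then shows that $\bF$ is comeager in $(\bS,\cT)$; equivalently, $\bS\setminus\bF$ is meager in $\bS$.

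The remaining point is a standard subspace argument. Decompose $\bL\setminus\bF=(\bL\setminus\bS)\cup(\bS\setminus\bF)$. The first piece is meager in $\bL$: since $\bS=\bigcap_{x\in\omega}U_x$ with each $U_x$ open and dense, its complement $\bL\setminus\bS=\bigcup_{x\in\omega}(\bL\setminus U_x)$ is a countable union of closed nowhere dense sets. For the second piece I would invoke the elementary lemma that if $A$ is dense in a space $X$ and $N\subseteq A$ is nowhere dense in $A$, then $N$ is nowhere dense in $X$: a nonempty $X$-open set $V$ contained in the $X$-closure of $N$ would, by density of $A$, meet $A$ in a nonempty relatively open subset of $A$ lying in the $A$-closure of $N$ (using that the $A$-closure is the $X$-closure intersected with $A$), contradicting nowhere density in $A$. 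Writing $\bS\setminus\bF=\bigcup_n N_n$ with each $N_n$ nowhere dense in $\bS$ and applying the lemma with $A=\bS$, $X=\bL$, we conclude that $\bS\setminus\bF$ is meager in $\bL$ as well. Combining the two pieces yields that $\bL\setminus\bF$ is meager, so $\bF$ is comeager in $\bL$.

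The only genuinely load-bearing ingredient is the transfer lemma together with the density of $\bS$ in $\bL$, and the latter is exactly where the special nature of $\overline{\bF}_p$ enters: every quantifier-free sentence consistent with the theory of characteristic-$p$ fields is already realized in the locally finite field $\overline{\bF}_p$, so each basic open set $\cU_\phi$ of $\bL$ meets $\bS$. Without this density, comeagerness in $\bS$ would carry no information about $\bL$; with it, the passage from $\bS$ to $\bL$ is purely formal.
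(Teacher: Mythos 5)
Your proposal is correct and follows the same route the paper intends: it uses exactly the two facts the paper records (density of $\bS$ in $\bL$ and the representation of $\bS$ as a countable intersection of open dense subsets of $\bL$) together with Theorem~\ref{thm:main} and Proposition~\ref{prop:top_equal}, merely spelling out the standard transfer of comeagerness from a dense $G_\delta$ subspace to the ambient space, which the paper leaves implicit.
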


Note that the results in \citep{Ackerman_etal} and \citep{Droste_Kuske} do not apply to Hall's universal group or the algebraic closure of $\bF_p$.

\begin{remark}
In the  examples described above  the set of structures which are \emph{not} universal-homogeneous is also dense in $\bS$. This is not hard to see for the random graph and the algebraic closure of \texorpdfstring{$\bF_p$}{Fp}.  For the space of locally finite groups, we note that  any finite group $G$ can be embedded into the locally finite group $G \oplus (\bQ/\mathbb Z)$. From a theorem of Mazurkiewicz~\cite[Exercise 6.2.A(a) on p.~370]{engelking_book} it follows that the space of universal-homogeneous structures (endowed with the induced topology) is homeomorphic to the space of irrational numbers.
\end{remark}

\subsection{The rational Urysohn space}
A metric space in which all distances are rational is called a $\bQ$-metric space. It is natural to consider $\bQ$-metric spaces as structures over the language $L_\bQ=\{d_r\colon r\in \bQ, r\geq 0\}$, where $d_r$ is the binary relation expressing that the distance between two points equals $r$.   Theorem~\ref{thm:main} can be applied to the class  $\cC$  of all finite $\bQ$-metric spaces (up to isometry). The Fra\"iss\'e limit of $\cC$ is called the rational Urysohn space. The elements of the space $\bS$ can be identified with $\bQ$-metric spaces on $\omega$. A sequence $A_1,A_2,\ldots$ of such metric spaces converges to $A$ iff for every two points $x,y\in\omega$ there is a number $n_0(x,y)\in\omega$ such that the distance between $x,y$ in $A_n$ is the same as in $A$, for all $n\geq n_0(x,y)$. From Theorem~\ref{thm:main} we conclude that the isometry type of the rational Urysohn space is comeager in the space of all countable $\bQ$-metric spaces. The same considerations apply to metric spaces in which the distances belong to some given finite or countable set.

\subsection{The Urysohn space}
Vershik~\cite{vershik_random_and_universal,vershik04,vershik06} proved that in the space of all metrics on a fixed countable set, those metrics whose completion is isometric to the Urysohn metric space form a comeager subset. It seems that Theorem~\ref{thm:main} cannot be applied to recover this result directly because the class of all finite metric spaces is not countable. In fact, in Vershik's setting the elements of the comeager subset are not isometric to each other (just their closures are), which is different from the conclusion of Theorem~\ref{thm:main}.

\subsection{Further examples}
Theorem~\ref{thm:main} can be applied to many other Fra\"iss\'e classes, for example to finite linear orders (whose Fra\"iss\'e limit is the dense linear order $(\bQ, <)$),  finite Boolean algebras (whose Fra\"iss\'e limit is isomorphic to the Boolean algebra of clopen subsets of the Cantor space), finite posets, finite join semilattices~\cite{droste_semilattice}, finite distributive lattices,  and so on.

\section*{Acknowledgement}
We are grateful to Anton Bernshteyn, Tam\'as K\'atay, Alex Kruckman and Minh Chieu Tran for finding errors in the previous versions of the paper and suggesting how to fix them.
We thank Alex Kruckman, Lionel Nguyen Van Th\'e and Manfred Droste for pointing out to us most references listed in the introduction.






\bibliography{hall_group_bib}

\bibliographystyle{plainnat}

\end{document}